\theoremstyle{plane}
\newtheorem{theorem}{Theorem}
\newtheorem{lemma}{Lemma}
\newtheorem{definition}{Definition}
\newtheorem{corollary}{Corollary}
\def\p{\partial}
\def\Int{{\mbox{Int}}}
\def\endproof{\hfill \vrule height8pt width4pt}
\begin{document}

\begin{center} {\bf \Large A semigroup of theta-curves
 in 3-manifolds} \end{center}

  \begin{center} {\rm  \sc Sergei Matveev\footnote{Partially supported
 by the RFBR grant
08-01-162   and the Program of Basic Research of RAS, project
09-T-1-1004.}, Vladimir Turaev\footnote{Partially supported
 by the NSF grant DMS-0904262.}}\end{center}

{\sc Abstract.} {\small We establish an existence and uniqueness
theorem for  prime decompositions of theta-curves   in
$3$-manifolds.}

 \section {Introduction }

 This paper is concerned with the existence and uniqueness of prime
decompositions of theta-graphs   in $3$-manifolds. A theta-graph is
a graph formed by two ordered vertices,  called the {\em leg} and
the {\em head}, and three oriented edges leading from the leg to the
head and labeled with the symbols $\{-,+,0\}$  (different edges
should have different labels.) Theta-graphs embedded in
$3$-manifolds are called theta-curves; their study is parallel   to
the  study of knots (i.e., knotted circles) in $3$-manifolds.  More
precisely, a {\em theta-curve} is a pair $(M,\Theta)$, where $M$ is
a compact connected oriented $3$-manifold and $\Theta   $ is a
theta-graph embedded in $\Int M$. By abuse of language, we will call
$\Theta$ a theta-curve in~$M$. For example, every
 $3$-manifold $M$ contains a unique (up to isotopy)     {\em flat theta-curve}       that
  lies
in a 2-disc embedded into $M$. The flat theta-curve in $S^3$  is
called the {\em trivial theta-curve}. Further examples of
theta-curves   can be obtained by tying knots on the edges of   flat
theta-curves (see below for   details). The resulting theta-curves
are said to be {\em  {knot-like}}.

By homeomorphisms of theta-curves we mean homeomorphisms of pairs
preserving orientation in the ambient $3$-manifolds and the
orientation and the labels  of the  edges of theta-curves. The set
of homeomorphism classes of theta-curves is denoted ${\cal T} $. We
define a {\it vertex multiplication} in ${\cal T} $, see \cite{wo}
for the case of theta-curves in $ S^3$.   Given theta-curves $
(M_i,\Theta_i)$ with $ i=1,2$,
    pick    regular neighborhoods $B_1 \subset M_1$ and $B_2\subset M_2 $ of
    the head of $\Theta_1 $ and
      the leg of $\Theta_2$, respectively.
Glue     $M_1\setminus \Int\ B_1$ and $M_2\setminus \Int \ B_2$
along an orientation-reversing homeomorphism $
\partial B_1\to
\partial B_2$   that  carries  the only intersection point of $
\partial B_1$ with the $i$-labeled  edge of $\Theta_1 $ to the  intersection
point of $  \partial B_2$ with  the $i$-labeled edge of $\Theta_2$
for  $i\in \{ -, 0, + \}$.
 The union $\Theta$ of  $\Theta_1 \cap (M_1\setminus \Int\ B_1)$ and   $\Theta_2 \cap (M_2\setminus \Int\ B_2)$ is a
theta-curve   in $ M=M_1\# M_2$. The theta-curve $ (M,\Theta)$  is
called the vertex product of $\theta_1=(M_1,\Theta_1)$,
$\theta_2=(M_2,\Theta_2)$ and denoted $\theta_1 \circ\theta_2$.

The vertex multiplication is associative and    turns ${\cal T} $
into a semigroup. The unit of ${\cal T} $ is the  trivial
theta-curve. The semigroup ${\cal T}$ is non-commutative but has a
big center: it follows from the definitions that all knot-like
theta-curves lie in the center of ${\cal T}$. Note also that
  a product theta-curve
$\theta_1\circ \theta_2$ is  trivial  if and only if
 both $\theta_1$ and $\theta_2$ are trivial, see \cite{Mo1,wo}.

We call a theta-curve {\em  prime} if it is non-trivial and does not
expand as a product of two non-trivial theta-curves. The following
theorem is the main result of this paper.

\begin{theorem} \label{main} Let $\theta=(M,\Theta)$ be a non-trivial theta-curve such
that all $2$-spheres in $M$ are separating. Then:

\begin{enumerate}
\item $\theta$ expands as a   product   $\theta=
\theta_1\circ \theta_2\circ \dots \circ \theta_n$  for a finite
 sequence $\theta_1, \ldots ,  \theta_n$ of prime theta-curves.

\item This expansion is unique up to relations of  type
$\theta'\circ \theta''=\theta''\circ \theta'$, where   $\theta'$
or $ \theta''$ is   knot-like.
\end{enumerate}

\end{theorem}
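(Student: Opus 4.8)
The plan is to encode the vertex multiplication geometrically, by ``decomposing spheres,'' and then follow the pattern of the Kneser--Milnor and Schubert prime decomposition theorems, adding the bookkeeping needed for the theta-graph. Call a $2$-sphere $S\subset\Int M$ transverse to $\Theta$ a \emph{decomposing sphere} for $\theta=(M,\Theta)$ if it meets each edge of $\Theta$ in exactly one point; a parity argument along the edges, using that every $2$-sphere in $M$ separates, shows such an $S$ separates the leg from the head. Cutting $(M,\Theta)$ along $S$ and capping the two boundary spheres by balls, each meeting $\Theta$ in an unknotted ``tripod'' whose ends carry the labels and orientations inherited from $\Theta$, produces theta-curves $\theta',\theta''$ with $\theta=\theta'\circ\theta''$; conversely the gluing sphere of any vertex product is a decomposing sphere. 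Here $\theta'$ is trivial exactly when $S$ bounds, on the corresponding side, a ball meeting $\Theta$ in an \emph{unknotted} tripod; call $S$ \emph{essential} when this happens on neither side. Hence a non-trivial $\theta$ is prime iff every decomposing sphere for it is inessential. It is convenient to work in the exterior $E=M\setminus\Int N(\Theta)$, where $N(\Theta)$ is a regular neighbourhood of $\Theta$ (a genus-$2$ handlebody): a decomposing sphere becomes a properly embedded thrice-punctured sphere whose three boundary circles are meridians, one of each edge, and the same parity argument shows that no meridian of an edge bounds a disc in $E$ and that meridians of distinct edges are non-isotopic in $E$; in particular all these pairs of pants are incompressible and, when essential, not parallel to $\partial E$.

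For part (1), the Kneser--Haken finiteness theorem bounds the number of components of a system of disjoint, pairwise non-parallel, incompressible surfaces in the compact manifold $E$, so one may fix a \emph{maximal} system $\mathcal S=\{S_1,\dots,S_{n-1}\}$ of disjoint, pairwise non-parallel, essential decomposing spheres for $\theta$. Being disjoint and each separating the leg from the head, the spheres of $\mathcal S$ are linearly ordered along $\Theta$; cutting and capping gives $\theta=\theta_1\circ\cdots\circ\theta_n$. No $\theta_i$ is trivial --- a trivial extreme piece would make $S_1$ or $S_{n-1}$ inessential, and a trivial middle piece would make two consecutive spheres parallel --- and each $\theta_i$ is prime: an essential decomposing sphere in $\theta_i$ can be isotoped off the two capping balls (otherwise it is parallel to $S_{i-1}$ or $S_i$, hence inessential), whereupon it becomes an essential decomposing sphere for $\theta$, disjoint from and non-parallel to every sphere of $\mathcal S$, contradicting maximality.

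For part (2), take two prime decompositions of $\theta$, with sphere systems $\mathcal S$ and $\mathcal S^{*}$; put them in general position and minimise the number of circles in $\mathcal S\cap\mathcal S^{*}$, allowing the Milnor-type move of surgering a sphere of $\mathcal S$ along a disc cut out of $\mathcal S^{*}$ (such a disc lies in $E$, being disjoint from $\Theta$). A lemma --- the usual disc-swap argument, now keeping track of $\Theta$ --- says this surgery replaces $\mathcal S$ by a system defining the same decomposition, after discarding the auxiliary $2$-spheres disjoint from $\Theta$ that it produces. When the count is minimal, every remaining circle $c$ of $\mathcal S\cap\mathcal S^{*}$ is essential on both of the pants that contain it, hence (by the non-isotopy of distinct-edge meridians) parallel to the meridian of one common edge $i$; taking one subdisc from each of the two spheres, $c$ bounds a $2$-sphere $T$ meeting $\Theta$ in two points of edge $i$, on one side of which the intersection with $\Theta$ is a single arc of edge $i$, and minimality forces that side not to be a ball meeting $\Theta$ in an unknotted arc, so it contributes a knot-like factor on edge $i$. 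Since knot-like factors lie in the centre of $\mathcal T$, each such circle can be removed by commuting one knot-like factor past its neighbour in one of the two words; after finitely many such commutations we reach $\mathcal S\cap\mathcal S^{*}=\varnothing$. Then each sphere of $\mathcal S^{*}$ lies in some prime piece $\theta_i$ and so, $\theta_i$ having no essential decomposing sphere, is inessential for $\theta_i$; being essential for $\theta$ it must then be parallel to $S_{i-1}$ or $S_i$. This yields an order-preserving bijection $\mathcal S^{*}\to\mathcal S$, so $n=m$ and the two words agree term by term. Reinstating the commutations gives the asserted uniqueness.

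The principal obstacle is the uniqueness part, and inside it two points: first, making the Milnor surgery move legitimate in the presence of the graph and of a possibly reducible $M$ --- verifying that the associated theta-decomposition is genuinely unchanged and that the auxiliary $2$-spheres can be absorbed; and second, the precise analysis of the residual meridional intersection circles, showing that each corresponds to exactly one commutation of a knot-like factor and that no further ambiguity remains. This last point is where the hypothesis that every $2$-sphere in $M$ separates is indispensable: it underlies all the parity counts that control how decomposing spheres, and their mutual intersections, can meet $\Theta$.
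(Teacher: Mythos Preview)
Your route is quite different from the paper's. The paper never compares two sphere systems directly. Instead it sets up an oriented graph $\Gamma$ whose vertices are finite sequences drawn from theta-curves, labeled knots, and closed $3$-manifolds (modulo the permitted commutations and deletion of trivial terms), with an edge $V\to W$ whenever $W$ arises from $V$ by an essential spherical reduction along a sphere meeting the relevant graph in $0$, $2$, or $3$ points. A mediator-sphere lemma (Lemma~\ref{clean}) shows that any two outgoing edges at a vertex are equivalent in the sense that their targets share a common root; together with a finiteness bound on path lengths this feeds into an abstract Diamond Lemma giving every vertex a unique root. Existence and uniqueness of the prime factorisation are then read off from the root of the one-term sequence $\theta$. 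In particular, $0$-point and $2$-point spheres are built into the graph from the outset, so manifold summands and knot-like factors are handled on the same footing as $3$-point spheres rather than appearing as side effects of surgery.

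The two obstacles you flag are genuine, and in your framework they are not merely technical. For the first: the lemma you invoke, that a Milnor surgery along a disc in $E$ leaves the theta-decomposition unchanged, is false as stated. The auxiliary $2$-sphere produced by the surgery may be essential in $M$, and then the surgery has moved a manifold summand $\tau(Q)$ across a decomposing sphere; this is already a commutation of a knot-like factor, so such commutations must be permitted during the surgery phase, not only at the end. For the second: you have not said what geometric modification of $\mathcal S$ or $\mathcal S^{*}$ realises ``commuting one knot-like factor past its neighbour'' in such a way that $\#(\mathcal S\cap\mathcal S^{*})$ strictly decreases. The $2$-sphere $T$ you build certifies that a knot-like factor is present, but eliminating the circle $c$ requires replacing one of the decomposing spheres by another lying on the far side of $T$, and one must then verify that the new sphere is still essential, still disjoint from its companions, and that the resulting prime decomposition differs from the old by exactly the one commutation. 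Carrying this out is essentially the substance of Motohashi's argument for $S^3$ and Miyazaki's for knots, and it does not come for free. The paper's Diamond-Lemma approach trades all of this case analysis for the single mediator-sphere lemma.
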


For   $M=S^3$, this theorem is due to Motohashi \cite{Mo1}. A
similar   theorem for   knots in $3$-manifolds is also true; we
obtain it at the end of the paper as a corollary of
Theorem~\ref{main}.

     A study of prime decompositions is a traditional area of
$3$-dimensional topology. We refer to \cite{Milnor} and
\cite{Shubert} for prime decompositions of $3$-manifolds and   knots
in~$S^3$, to~\cite{miya} for prime decompositions of   knots in
$3$-manifolds, and to \cite{Petronio}, \cite{Mat-HA} for prime
decompositions of orbifolds and knotted graphs in $3$-manifolds.
Our interest  in prime decompositions of theta-curves is due to a connection
to so-called knotoids recently introduced by the second named author \cite{tu2}.

Let us explain the reasons for our assumption on the $2$-spheres in
Theorem~\ref{main}. If $M$ contains  a non-separating $2$-sphere
$S$, then there is a theta-curve $\Theta\subset M$ meeting $S $
transversely in  one point of  an edge   of $\Theta$. Using $S$ it
is easy to see that tying any local knot on this edge one obtains a
theta-curve isotopic to $\Theta$.  This yields an infinite family of
knot-like theta-curves that are   factors of $(M,\Theta)$ and
compromises  the existence and uniqueness  of prime decompositions
of $(M,\Theta)$.

To prove Theorem~\ref{main} we follow the general scheme introduced
in \cite{Mat-HA}. This scheme has been successfully applied to prove
the existence and uniqueness  of prime decompositions in many
similar geometric situations.

\section{Preliminaries on knots}\label{knots}

\begin{definition} \label{df:knot} A knot is a pair
$(Q,K)$ where $Q$ is a compact connected oriented $3$-manifold
 and $K$  is an oriented
simple closed curve in $\Int\  Q$. Two knots $(Q,K), (Q', K')$ are {
equivalent}, if there is a homeomorphism $(Q,K)\to (Q',K')$
preserving orientations of both $Q$ and $K$.
\end{definition}

  We call a knot $ (Q,K)$ {\em
flat} if $K$ bounds an embedded disc in $Q$. A knot $ (Q,K)$ is
 {\em trivial} if $Q=S^3$  and $K$ is   flat. We
emphasize that all  knots in $Q\neq S^3$ are non-trivial. Denote by
${\cal K} $ the set  of all equivalence classes of knots. We equip
${\cal K} $ with a binary operation $\#$ (connected sum) as follows.
Let $k_i=(Q_i,K_i) \in {\cal K}  $ for $i=1,2$. Choose a closed
$3$-ball $B_i\subset Q_i $ such that $\l_i =B_i\cap K_i $ is an
unknotted arc in $B_i$. Let $h\colon (B_1,l_1) \to (B_2,l_2)$ be a
homeomorphism which reverses orientations of both the ball and the
arc. Glue $Q_1\setminus \Int \  B_1$ and $Q_2\setminus \Int \ B_2$
 along  $h \vert_{\p B_1}\colon (\p B_1,\p l_1)
\to (\p B_2,\p l_2)$. The resulting knot $ (Q_1\# Q_2 ,K_1\# K_2 )$
does not depend on the choice of $B_1, B_2$, and $h$. This knot  is
called the {connected sum } of $k_1, k_2$ and denoted $k_1\# k_2$.

The operation $\#$ is commutative, associative, and has    a neutral
element represented by the trivial knot. A classical argument due to
Fox \cite{F}  shows that the knot $k=k_1\# k_2$ is trivial   if and
only if both $k_1$ and $k_2$ are trivial. Namely, if $k$ is trivial,
then $\#_{i=1}^\infty k_i=k$ and $k_1\#(\#_{i=2}^\infty k_i)=k_1$.
Therefore $k_1=k$ is trivial.

 \section{From knots to theta-curves}\label{knots+}

Given a knot $k=(Q,K)$   and a label $i\in \{-,0,+\} $, we  define a
theta-curve in $Q$ as follows. Pick a disc $D\subset Q$ meeting $K$
along an arc $l'=D\cap K=\partial D\cap K$.    The complementary arc
$l=K\setminus \Int \ l'$ of $K$ receives the label $i$ and the
orientation induced by that of $K$, the arcs $l'$ and $l''=\overline
{\partial D \setminus l'}$ receive the remaining labels. Then
$\Theta_K=l\cup l'\cup l''$ is a theta-curve in $Q$ (cf.\
Figure~\ref{tau}). The homeomorphism class of the theta-curve $(Q,
\Theta_K)$ does not depend on the choice of $D$ because any two such
disks are isotopic. This class is denoted $\tau_i (k)$.

It is easy to see from the definitions that the map $  {\cal K} \to
{\cal T}, k\mapsto \tau_i(k)$ is a semigroup homomorphism. This
homomorphism is injective because its composition with the map
${\cal T} \to \cal K$ removing the $j$-labeled edge (for $j\neq i$)
is the identity.

A theta-curve is {\em knot-like} if it lies in the image of one of
$\tau_i$ for $i\in \{-,0,+\} $. As was   mentioned above, the
knot-like theta-curves commute with all theta-curves, i.e., lie in
the center of ${\cal T}$.

Given a knot $k=(Q,K)$, a theta-curve
  $\theta=(M, \Theta)$, and a label $i\in \{-,0,+\} $, we   define
  the {\it knot
insertion} of $k$ into $\theta$ to be the theta-curve $ \tau_i (k)
\circ \theta=\theta \circ \tau_i (k)$.  To construct this
theta-curve geometrically,  pick a $3$-ball $B \subset M$   such
that $B\cap \Theta$  is an unknotted arc in the $i$-labeled edge of
$\theta$. The theta-curve $ \tau_i (k) \circ \theta=\theta \circ
\tau_i (k)$ is obtained by cutting off $(B, B\cap \Theta)$ from
$(M,\Theta)$ and coherent filling the resulting hole by $(Q,K)$. For
$Q=S^3$, this is the standard tying of local knots on the edges of
$\theta$.

\begin{figure}
\centerline{\psfig{figure=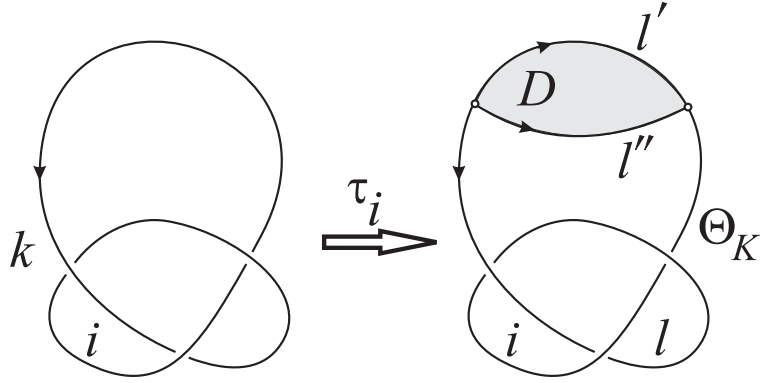,height=5cm}}
  \caption{The map $\tau_i$}
  \label{tau}
\end{figure}

In analogy with $\tau_i$, we define a homomorphism $\tau \colon
{\cal M} \to {\cal T}$, where ${\cal M} $ is the semigroup of
compact connected oriented $3$-manifolds
 with respect to connected
summation. If $M\in \cal M$, then $\tau (M)$ is $M$ with a flat
theta-curve inside. This suggests a notion of a {\em manifold
insertion}.   The insertion of $M\in {\cal M}$ into a theta-curve
$\theta=(Q, \Theta)$ yields the theta-curve $\tau(M)\circ
\theta=\theta \circ \tau (M)$  obtained by replacing a ball in $
Q\setminus \Theta$ by a copy of punctured $M$. The same theta-curve
can be obtained by inserting a  flat knot in $M$ into $  \theta$.

\section{Prime theta-curves and knots}\label{tc3}

\begin{lemma}\label{le1} Let $k=(Q,K)$ be a knot, $i\in \{-,0,+\}$,  and
$\tau_i(k)=(Q,\Theta_K)$   the corresponding theta-curve. Let $D$ be
a disc in $Q$ such that $\p D $ is the union of two edges of
$\Theta_K$ with labels distinct from  $i$. If $S\subset Q$ is a
$2$-sphere
 meeting each edge of $\Theta_K$ in one point, then there
is a self-homeomorphism of $Q$ which keeps $\Theta_K$ fixed and
carries $S$ to a $2$-sphere $S' $ such that $S'\cap D$ is a single
arc.
\end{lemma}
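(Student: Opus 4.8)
The plan is to put $S$ in general position with respect to $D$ and then simplify the $1$-manifold $S\cap D$ by a finite sequence of self-homeomorphisms of $Q$ fixing $\Theta_K$. Write $l$ for the $i$-labeled edge of $\Theta_K$, so that $\p D=l'\cup l''$ consists of the two edges with labels $\neq i$, and $K=l\cup l'$. Let $p,q,r$ be the points in which $S$ meets $l,l',l''$, respectively. Then $S\cap\p D=\{q,r\}$, so $S\cap D$ is a properly embedded $1$-manifold in $D$ with exactly one arc component $\alpha$ (joining $q$ to $r$) together with finitely many circles lying in $\Int D$. It suffices to kill all the circle components.

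I would argue by induction on the number of circles of $S\cap D$. If there is at least one, pick a circle $c\subset S\cap D$ innermost on $D$; it bounds a disc $D_0\subset D$ with $\Int D_0\cap S=\varnothing$. Since $c\subset\Int D$, and $\Int D$ is disjoint from $\Theta_K$ (recall $\p D=l'\cup l''$ and $D\cap K=l'$, while $l\subset K$), we get $D_0\subset\Int D$ and $D_0\cap\Theta_K=\varnothing$. On $S$ the circle $c$ bounds two discs $E_1,E_2$, and the $2$-spheres $\Sigma_j=E_j\cup D_0$ $(j=1,2)$ satisfy $\Sigma_j\cap\Theta_K=\{p,q,r\}\cap E_j$. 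A parity count (each edge of $\Theta_K$, being transverse to the separating sphere $\Sigma_j$ and meeting $S$, hence $\Sigma_j$, at most once, crosses $\Sigma_j$ zero or one times according as the leg and head of $\Theta_K$ lie on the same side or on opposite sides of $\Sigma_j$, and the three edges all join the leg to the head) gives $|\Sigma_j\cap\Theta_K|\in\{0,3\}$. As $\alpha$ is disjoint from $c$ it lies in one of $E_1,E_2$, say $\alpha\subset E_1$; then $q,r\in E_1$, so $|\Sigma_1\cap\Theta_K|\geq 2$, forcing $|\Sigma_1\cap\Theta_K|=3$, whence $p\in E_1$ and $\Sigma_2\cap\Theta_K=\varnothing$. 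Thus $\Theta_K$, $\Int E_1$ and $\alpha$ lie on one side of $\Sigma_2$, and the other side $Y$ is disjoint from both $\Theta_K$ and $\alpha$. Pushing the disc $E_2$ of $S$ across $Y$ onto a push-off of $D_0$, and then perturbing it off $D$, is an ambient isotopy of $Q$ supported near $Y$; it fixes $\Theta_K$ and carries $S$ to a $2$-sphere meeting $D$ in $\alpha$ together with strictly fewer circles ($c$ and every circle contained in $E_2$ disappear, while the part of $S\cap D$ in $E_1$ is untouched). Iterating produces the required $S'$.

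The main obstacle is precisely this last move: turning ``push $E_2$ across $Y$'' into a genuine ambient isotopy of $Q$. What is needed is a good description of the region $Y$ cut off by $\Sigma_2$ --- in the favourable case $Y$ is a ball and the push is routine, but in general one must either recognize $Y$ as a ball directly or replace the isotopy by a suitable self-homeomorphism of $Q$ supported near $Y$. The function of the parity and position analysis above is exactly to make $Y$ disjoint from $\Theta_K$ and from $\alpha$, so that any homeomorphism supported near $Y$ automatically keeps $\Theta_K$ fixed and leaves the arc $\alpha$ intact; note that this analysis also uses that $\Sigma_j$ separates $Q$, which for instance holds whenever $Q$ has no non-separating $2$-spheres. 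Finally I would record the trivial but necessary point that any self-homeomorphism of $Q$ fixing $\Theta_K$ carries a $2$-sphere meeting each edge of $\Theta_K$ once onto another such sphere, so that the induction stays within the hypotheses and the output $S'$ still meets each edge of $\Theta_K$ in a single point.
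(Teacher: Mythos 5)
There is a genuine gap, and it sits exactly where you flag it. Your reduction move operates on the sphere: you surger $S$ along the innermost disc $D_0\subset D$ and then want to realize the replacement of $E_2$ by a push-off of $D_0$ as an ambient isotopy (or at least a self-homeomorphism) of $Q$ supported near the region $Y$ bounded by $\Sigma_2=E_2\cup D_0$. This requires $Y$ to be a $3$-ball, and nothing in the hypotheses gives that: $Q$ is not assumed irreducible, and in the situations where the lemma is actually used (non-prime $\tau_i(k)$, i.e.\ knots in connected sums) $Q$ typically \emph{does} contain essential spheres disjoint from $\Theta_K$, so $\Sigma_2$ may well be essential and $Y$ a punctured copy of a nontrivial summand. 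Then there is no isotopy pushing $E_2$ across $Y$, and no homeomorphism "supported near $Y$" is exhibited either; indeed it is not clear that \emph{any} self-homeomorphism of $(Q,\Theta_K)$ carries $S$ to your particular surgered sphere, since surgering a sphere along a disc can change the homeomorphism type of its complementary pieces. So the induction step is not established. (A secondary point: your parity count needs $\Sigma_j$ to separate, which is an extra hypothesis not present in the lemma as stated.)

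The paper's proof avoids this obstacle by dualizing the move: it never moves $S$ during the induction, but instead does the innermost-circle surgery on the \emph{disc} $D$. An innermost circle bounds a disc $A\subset S$ with $A\cap D=\p A$; replacing the subdisc $A'\subset D$ bounded by $\p A$ with $A$ and pushing off gives a new disc spanning the same two edges and meeting $S$ in $\alpha$ and fewer circles. This is a purely constructive replacement of a disc with fixed boundary -- no ball and no ambient motion are needed -- and after finitely many steps one has a spanning disc $D'$ with $D'\cap S=\alpha$. Only at the very end does one invoke a single self-homeomorphism $h$ of $Q$ fixing $\Theta_K$ and carrying $D'$ to $D$ (uniqueness of the spanning disc up to such homeomorphisms), and then $S'=h(S)$ meets $D$ in the single arc $h(\alpha)$. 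Note also that the lemma only asks for a self-homeomorphism, not an isotopy of $S$; your approach aims at the stronger isotopy statement, which is precisely what fails when essential spheres are around. If you want to salvage your line of argument, you would have to handle the case of essential $\Sigma_2$ separately, and at that point you are essentially forced into the paper's strategy of moving $D$ rather than $S$.
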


\begin{proof} The set   $S\cap D$ consists of an arc $\alpha$ joining two
 points of $S\cap \Theta_K$ and possibly of several
circles. The   innermost circle argument yields a disc $A\subset S$
such that $A\cap D=\p A$. The circle  $\p A$ bounds a disc
$A'\subset D$.
 Then the disc
$(D\setminus A')\cup A$ is isotopic to a disc  in $Q$ which spans
the same edges of $\Theta_K$ and crosses $S$ along $\alpha$ and
fewer
  circles. Continuing by induction, we obtain a spanning disc
$D'$ such that $D'\cap S=\alpha$. There is a homeomorphism $h\colon
Q\to Q$ that keeps $\Theta_K$ pointwise and carries $D'$ to $D$.
Then $S'=h(S)$ is a required sphere.
\end{proof}

\begin{lemma}\label{le2} A knot $k=(Q,K)$   is prime if and only if
the   theta-curve ${\tau}_i(k)=(Q,\Theta_K)$ is prime for some (and
hence for any)  $i\in\{ -,0,+\}$. The 3-manifold $Q$ is   prime
  if and only if the flat theta-curve $ \tau(Q)$ is prime.
\end{lemma}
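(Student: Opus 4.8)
The plan is to prove Lemma~\ref{le2} by relating the two multiplications --- connected sum of knots and vertex multiplication of theta-curves --- via the homomorphisms $\tau_i$ and $\tau$, using injectivity of $\tau_i$ (established in Section~\ref{knots+}) together with the triviality criteria for products (Fox's argument for knots, and the cited fact that $\theta_1\circ\theta_2$ is trivial iff both factors are). First I would dispose of the easy implications. Since $\tau_i\colon {\cal K}\to {\cal T}$ is an injective semigroup homomorphism sending the trivial knot to the trivial theta-curve, any factorization $k=k_1\# k_2$ with both $k_j$ non-trivial yields $\tau_i(k)=\tau_i(k_1)\circ\tau_i(k_2)$ with both factors non-trivial; hence if $\tau_i(k)$ is prime then $k$ is prime. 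The analogous statement for $\tau\colon {\cal M}\to {\cal T}$ handles the ``only if'' direction of the manifold statement.

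The substantive direction is: if $\tau_i(k)$ (resp. $\tau(Q)$) factors non-trivially, then so does $k$ (resp. $Q$). Here the key geometric input is Lemma~\ref{le1}. A non-trivial factorization $\tau_i(k)=\theta_1\circ\theta_2$ is realized by a $2$-sphere $S\subset Q$ meeting each of the three edges of $\Theta_K$ transversely in one point and separating $Q$ into the two factor pieces (each non-trivial, i.e. the piece is not a punctured ball carrying a trivial arc/flat theta-curve). By Lemma~\ref{le1}, after a self-homeomorphism of $Q$ fixing $\Theta_K$, I may assume $S\cap D$ is a single arc $\alpha$, where $D$ is the disc spanning the two edges of $\Theta_K$ labeled differently from $i$. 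Then $S$ meets the $i$-labeled edge $l$ in one point, and cutting $Q$ along $S$ and examining how $\Theta_K$ and $D$ sit across $S$ shows that $S$ restricts to a $2$-sphere decomposition compatible with the knot structure: the arc $\alpha$ together with the boundary circle $\partial D$ organizes $\Theta_K$ so that the knot $K=l\cup l'$ meets $S$ in exactly two points, exhibiting $k=k_1\# k_2$. One then checks that each $k_j$ is non-trivial precisely because the corresponding $\theta_j$ is non-trivial (using that $\tau_i$ sends non-trivial knots to non-trivial theta-curves, so a trivial $k_j$ would force $\theta_j=\tau_i(k_j)$ trivial). This gives the contrapositive: $k$ prime $\Rightarrow$ $\tau_i(k)$ prime. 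For the manifold statement one argues similarly: a non-trivial factorization of $\tau(Q)$ is given by a $2$-sphere $S$ meeting each edge of the flat theta-curve once; pushing $S$ off a spanning disc as in Lemma~\ref{le1} lets us assume $S$ is disjoint from the flat theta-curve except near it, and then $S$ bounds (after an isotopy) a $2$-sphere in $Q$ giving a non-trivial connected-sum decomposition $Q=Q_1\# Q_2$, so $Q$ not prime; conversely $Q=Q_1\# Q_2$ non-trivially gives $\tau(Q)=\tau(Q_1)\circ\tau(Q_2)$ non-trivially.

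I expect the main obstacle to be the bookkeeping in the substantive direction: verifying carefully that the sphere $S$, once put in the normal form supplied by Lemma~\ref{le1}, genuinely induces a \emph{knot} connected-sum sphere (meeting $K$ in two points, with unknotted arcs on either side in suitable balls) rather than merely a decomposition of the underlying $3$-manifold, and that the non-triviality of the theta-curve factors transfers to non-triviality of the knot factors. The point where a local knot could ``hide'' in the manifold summand versus on an edge needs attention --- this is exactly the phenomenon flagged in the introduction with non-separating spheres, but here all relevant spheres are the separating ones coming from the factorization, so the argument should go through cleanly once the normal form is in place. The rest is formal manipulation with the homomorphisms $\tau_i$, $\tau$ and the two triviality criteria.
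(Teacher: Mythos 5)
Your proposal is correct and follows essentially the same route as the paper: the easy direction via injectivity of the homomorphism $\tau_i$, and the substantive direction by realizing a non-trivial factorization of $\tau_i(k)$ by a sphere $S$ meeting each edge once, normalizing $S\cap D$ to a single arc via Lemma~\ref{le1}, and then deleting the extra edge to exhibit a non-trivial connected sum of $k$. The only cosmetic difference is that the paper deduces the manifold statement by applying the knot statement to a flat knot in $Q$ rather than rerunning the sphere argument.
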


\begin{proof} Recall that   a knot (resp., a theta-curve)   is prime if it
is non-trivial and does not split as a connected  sum (resp., a
product) of two non-trivial knots (resp.,   theta-curves). Since
$\tau_i$ is an injective homomorphism, if ${\tau}_i(k)$ is prime
then so is $k$. Suppose that that ${\tau}_i(k)$ is not prime. Then
there is a sphere $S\subset Q$ meeting each edge of $\Theta_K $ in
one point and dividing $(Q,\Theta_K)$ into two   pieces $(Q_j,
Q_j\cap \Theta_K), j=1,2$,
  not homeomorphic to a $3$-ball with three radii. Denote by
$D$ a disc spanning the edges of $\Theta_K$ with labels distinct
from $ i$. By Lemma~\ref{le1} we may assume that $S \cap D$ is an
arc dividing $D$ into two subdiscs.  We conclude that after deleting
one of the edges spanned by $D$, i.e., after returning to $k=(Q,K)$,
the pieces $(Q_j, Q_j\cap K)$ remain non-trivial, i.e., are not
homeomorphic to a $3$-ball with two radii. We   conclude that $k$
splits as a connected  sum of two non-trivial knots.

The second claim of the lemma is obtained by applying the first
 claim to the flat knot $K_0\subset Q$. It is clear that $Q$ is prime
 if and only if the knot $(Q,K_0)$ is prime. The latter holds if and
 only if the theta-curve $ \tau_i(K_0)=\tau(Q)$ is prime.
\end{proof}

\section{Spherical reductions}\label{spr} Spherical reductions are
operations inverse to taking products of theta-curves and
inserting
  knots. Denote by $\cal U$ the set of all pairs $(M,G)$, where $M$ is a
 compact connected oriented  $3$-manifold  and $G\subset M$ is either a theta-graph, or a knot labeled
 by $i\in \{-,0,+\}$,
  or the empty set. The pairs are considered up to homeomorphisms preserving all
  orientations and labels.
  In other words, ${\cal U}={\cal T} \bigsqcup   {\cal K}_- \bigsqcup   {\cal K}_0 \bigsqcup   {\cal K}_+ \bigsqcup {\cal
  M}$,  where
   ${\cal K}_i$ is the set of
 $i$-labeled knots.
\begin{definition} Let $(M,G)\in \cal U$.  A  separating sphere
 $S$   in $M$ is {  admissible}
if it is in general position with respect to $G$  and $S\cap G$ is
either empty or consists of $2$ or $3$ points.
\end{definition}

\begin{definition} \label{reductions} Given  an admissible sphere $S$  in
$(M,G)\in \cal U$, we cut $(M,G)$ along $S$ and add cones over two
copies of $(S, S\cap G)$  on the boundaries of the resulting two
pieces of $(M,G)$.  This gives two   pairs $(M_j,G_j)\in {\cal U},
j=1,2$, where the orientations and labels of the edges of $G_j$ are
inherited from those of $G$.
 We say that these pairs    are
  obtained by { spherical reduction} of $(M,G)$ along $S$.
\end{definition}

The sphere $S$ as above and the reduction  along $S$  are {\it
inessential} if $S$
  bounds a $3$-ball $B\subset M$ such that $B\cap G$ is either empty, or a proper unknotted arc, or
  consists of three radii of $B$.  The reduction along an inessential sphere produces a copy of
$(M,G)$ and a trivial pair, which is either    $ (S^3, \emptyset) $,
or a trivial knot, or a trivial theta-curve.

\section{Mediator spheres}
\begin{definition}\label{mediator} Let $M$ be a $3$-manifold and
$S_1,S_2,S_3$ three mutually transversal    spheres in $M$. We call
  $S_3 $   a sphere-mediator for
$S_1,S_2$, if both numbers $\#(S_3\cap S_2)$ and  $\#(S_3\cap S_1)$
are strictly smaller than $\#(S_2\cap S_1)$. Here $\#$ denotes the
number of circles.
\end{definition}

\begin{lemma} \label{clean} Let $(M,G)\in {\cal U}$ and
$S_1,S_2$ be     admissible  essential mutually transversal spheres
in $M$ such that $S_1\cap S_2\neq \emptyset$. If   all $2$-spheres
in $M$ are separating, then there is an essential sphere-mediator
$S_3 $ for $S_1,S_2$.
\end{lemma}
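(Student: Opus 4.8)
The plan is to analyze the intersection circles $S_1 \cap S_2$ and extract a mediator by a standard "cut and re-glue" construction. Fix the curves $G$ and put everything in general position. The collection $S_1 \cap S_2$ consists of finitely many disjoint circles on each of $S_1$ and $S_2$. First I would pick a circle $\gamma \subset S_1 \cap S_2$ that is innermost on $S_1$, meaning it bounds a disc $D_1 \subset S_1$ whose interior is disjoint from $S_2$; I would choose $\gamma$ so that $D_1$ meets $G$ in as few points as possible (zero, one, or conceivably more, but admissibility of $S_1$ bounds the total). The circle $\gamma$ also bounds two discs $D_2', D_2''$ in $S_2$. The candidate mediator is one of the two spheres $S_3' = D_1 \cup D_2'$ or $S_3'' = D_1 \cup D_2''$, pushed slightly off $S_2$ to be transversal to both $S_1$ and $S_2$.

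The key point is the count of intersection circles. After the push-off, $\#(S_3 \cap S_2)$ is at most the number of circles of $S_1 \cap S_2$ lying inside the chosen disc of $S_2$ (the $D_1$ part contributes nothing since its interior avoids $S_2$), and $\#(S_3 \cap S_1)$ is at most the number of circles inside $D_2'$ or $D_2''$ respectively, plus we lose $\gamma$ itself — so at least one circle disappears relative to $\#(S_1 \cap S_2)$. Choosing the side ($D_2'$ versus $D_2''$) that contains fewer (or at most half) of the remaining circles, together with the fact that $\gamma$ is no longer counted, gives both strict inequalities $\#(S_3 \cap S_1) < \#(S_1 \cap S_2)$ and $\#(S_3 \cap S_2) < \#(S_1 \cap S_2)$. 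Here I would need the elementary observation that if $n-1$ circles are distributed between two complementary discs, one disc gets at most $n-1 < n$ of them; combined with discarding $\gamma$ this closes the estimate on the $S_1$ side as well.

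The main obstacle — and the place the hypothesis that all $2$-spheres in $M$ are separating and that $S_1, S_2$ are \emph{essential} gets used — is ensuring the mediator $S_3$ can itself be taken essential and admissible. Since every $2$-sphere in $M$ separates, $S_3$ is automatically separating. If $S_3$ turned out to be inessential, it would bound a ball $B$ meeting $G$ in at most an unknotted arc or a triple of radii; I would then use $B$ to isotope $S_1$ across $S_3$, removing $\gamma$ (and the circles it cobounds with parallel pieces) from $S_1 \cap S_2$ without creating new intersections, contradicting minimality of the configuration or else reducing $\#(S_1 \cap S_2)$ to a point where a mediator is trivially available — ultimately an induction on $\#(S_1 \cap S_2)$. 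Admissibility of $S_3$ (that $S_3 \cap G$ has $0$, $2$, or $3$ points) follows because $S_3 \cap G \subset (D_1 \cap G) \cup (D_2' \cap G)$, and by choosing $\gamma$ and the innermost disc to minimize $D_1 \cap G$ one can force this count into the admissible range; the case analysis on whether $\gamma$ separates the punctures of $S_1$ (resp. $S_2$) into $(0,3)$, $(1,2)$, or $(2,1)$ groups is the routine but slightly delicate bookkeeping. I expect the argument to be organized as a double induction on $\#(S_1 \cap S_2)$ and on $\#(D_1 \cap G)$ for the innermost disc, with the essential-sphere upgrade handled by the ball-swallowing move just described.
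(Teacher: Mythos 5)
Your construction coincides with the paper's: choose an innermost disc $D_1\subset S_1$, form the two surgered spheres $D_1\cup D_2'$ and $D_1\cup D_2''$, and note that a push-off of either is disjoint from $S_2$ and meets $S_1$ in at most $\#(S_1\cap S_2)-1$ circles. The genuine gap is in your handling of the case where the chosen candidate is inessential. There is no ``minimality of the configuration'' to contradict --- $S_1,S_2$ are arbitrary given spheres --- and the induction you sketch does not close: after isotoping $S_1$ to $S_1'$ with $\#(S_1'\cap S_2)<\#(S_1\cap S_2)$, the inductive hypothesis produces a sphere with controlled intersection with $S_1'$ and $S_2$ but gives no control on its intersection with the \emph{original} $S_1$, so it need not be a mediator for the original pair. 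The correct conclusion of your own move (and what the paper does, pushing a disc of $S_2$ rather than of $S_1$ across the trivial ball) is that the isotoped sphere is \emph{itself} the mediator: it is isotopic to $S_1$ by an isotopy supported near the trivial ball and preserving $G$, hence essential; it is disjoint from $S_2$; and it meets $S_1$ only in circles parallel to those of $S_1\cap \Int\, D_2'$, hence in fewer than $\#(S_1\cap S_2)$ circles. No induction is needed. You also must rule out that the ball bounded by the inessential candidate is the complementary region rather than the small one; the paper does this by observing that a ball meeting $G$ trivially cannot contain the essential sphere $S_2$.

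The two points you defer as ``routine bookkeeping'' are in fact where the hypotheses enter and should be spelled out. First, $D_1$ can always be chosen with $\#(D_1\cap G)\le 1$: the curves $S_1\cap S_2$ bound at least two disjoint innermost discs on $S_1$, which together carry the at most $3$ points of $S_1\cap G$. Second, because every sphere in $M$ separates, no sphere meets $G$ in exactly one point (a knot crosses a separating sphere an even number of times, and a theta-graph crosses it an even number of times or at least three times). Combined with $\#(D_1\cap G)\le 1$ and $\#(S_2\cap G)\le 3$, this forces at least one of the two candidates to meet $G$ in $0$, $2$, or $3$ points, which is what makes it admissible (as required for its use in Lemma~\ref{ee}) and makes the ball in the inessential case meet $G$ trivially. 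Without the bound $\#(D_1\cap G)\le 1$, your candidate could meet $G$ in up to six points and the argument would break down.
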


\begin{proof} Using an innermost circle argument, we can find   two
 discs in    $ S_1$ intersecting  $S_2$ solely along their
boundaries.   Since   $S_1$ meets $G$ in $\leq 3$ points, one of the
discs,   $D$,  meets $G$ in $\leq 1$ point. The circle $\p D $
splits $S_2$ into two discs $D',D''$ such that $S'=D'\cup D$ and
$S''=D''\cup D$ are embedded spheres in~$M$. Since all spheres in
$M$ are separating,  $S' $ and $S''$  bound    manifolds
$W',W''\subset M$ respectively so that $W'\cap W''=D$ and $\p
(W'\cup W'')=S_2$. Let $X$ be the closure of $M\setminus (W'\cup
W'')$, see Figure~\ref{s1s2}.

\begin{figure}
\centerline{\psfig{figure=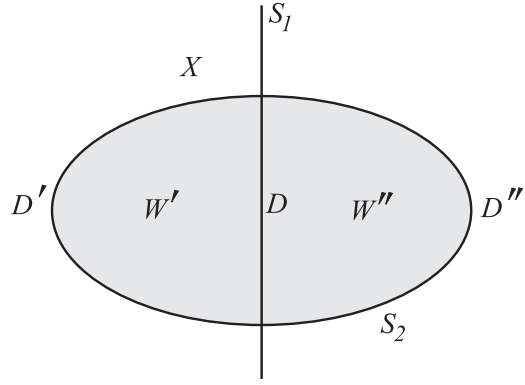,height=5cm}}
  \caption{The spheres $S_1$ and $S_2$}
  \label{s1s2}
\end{figure}

Case 1:   $D\cap G=\emptyset$. Since the intersection of $G$ with a
separating sphere cannot consist of one point and $G$ meets $S_2$ in
$\leq 3$ points, at least one of the spheres $S'$, $S''$, say, $S'$,
does not meet $G$.   If $S'=\p W'$ is essential, then pushing it
slightly inside $ W'$ we obtain a sphere-mediator for $S_1,S_2$.
Indeed, the latter sphere is disjoint from $S_2$ and meets $S_1$ in
fewer circles.

If $S'$ is inessential, then it bounds a $3$-ball in $M$ disjoint
from $G$. This ball is
 either $W'$ or   $W''\cup X$. The second option is impossible, since
 $W''\cup X$ contains  the essential sphere
  $S_2$. Hence $W'$   is a $3$-ball, and we can use
  it  to isotope $D'$ to the other side of $S_1$ and thus transform  $S_2$ into a parallel copy of $S''$
 disjoint from $S_2$.
 This copy of $S''$ is a sphere-mediator for $S_1,S_2$:   it intersects $S_1$ in fewer circles than $S_2$
 and is essential, since     $S_2$ is essential.

Case 2:   $D\cap G $ is a one-point set. An  argument  as above
shows that
    one of the
spheres  $S',S''$, say
 $S'$, meets $G$ in two points.
  If $S'$ is essential, then after a small isotopy it can be taken as a
  sphere-mediator.
If $S'$ is inessential, then $W'$  is a $3$-ball and  $W' \cap G$ is
an unknotted arc. As above,
   we can      use $W'$ to isotope $D'$ to the other side of $S_1$ and thus transform  $S_2$ into a
  sphere-mediator for $S_1$, $S_2$.
\end{proof}

\section{  Digression into theory of roots} Let $\Gamma$ be an oriented graph.  The set of vertices of   $\Gamma$ will be denoted $\mathbb{V}(\Gamma)$.
 By a {\emph path} in $ \Gamma $ from a vertex $V$ to a vertex $W$ we mean a sequence of coherently
 oriented edges $\overrightarrow{VV_1},\overrightarrow{V_1V_2}, \dots, \overrightarrow{V_nW}$, where $V_1, ..., V_n\in \mathbb{V}(\Gamma)$.
 A vertex
 $W$ of $\Gamma$    is a {\em subordinate} of a vertex $V$, if  either $V=W$ or there
 is a  path from $V$ to
 $W$ in $\Gamma$.
   A vertex
 $W$   is  a {\em root} of   $V$, if $W$ is a subordinate of  $V$ and $W$ has no outgoing edges.

 We say that $\Gamma$ has
 property {\rm (F)} if for any vertex  $V\in \mathbb{V}(\Gamma)$ there is an integer $C\geq 0 $ such that
  any path in $ \Gamma $ starting at $V$
consists of no more than $C$ edges.
  It is obvious that if $\Gamma$ has
 property {\rm (F)} then every vertex of $\Gamma$ has a
  root. To study the uniqueness of the root, we need the following
  notion.

\begin{definition}\label{ee}
 Two edges $e$ and $d$ of   $\Gamma$   are equivalent   if there is a sequence of edges $e=e_1, e_2,\dots,e_n=d$
of $\Gamma$ with the same initial vertex   such that the terminal
vertices of   $e_i$ and $e_{i+1}$ have a common root for all $i= 1,
\ldots, n-1$.
\end{definition}

 We say that $\Gamma$ has
 property {\rm (EE)} if any   edges of $\Gamma$ with common initial
 vertex are equivalent. The following theorem   is a version of the classical Diamond Lemma
due to Newman~\cite{ne}.

 \begin{theorem}  {(\rm \cite{Mat-HA})}  \label{abstract} If   $\Gamma$
 has
 properties {\rm (F)} and  {\rm (EE)}, then every vertex of $\Gamma$ has a
 unique root. \endproof
 \end{theorem}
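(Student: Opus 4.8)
The plan is to induct on the length bound supplied by property (F). For a vertex $V$, let $C(V)$ denote the least integer such that every path in $\Gamma$ starting at $V$ has at most $C(V)$ edges; property (F) is exactly the assertion that $C(V)$ exists. Existence of a root is already observed in the text: starting at $V$ and following outgoing edges as long as possible, the bound $C(V)$ forces the process to stop at a vertex with no outgoing edges, which is then a root of $V$. So the whole content of the theorem is \emph{uniqueness}, and this I would establish by induction on $C(V)$.

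The base case $C(V)=0$ is immediate: then $V$ has no outgoing edges, so $V$ is its only subordinate, hence its only root. For the inductive step, assume $C(V)=n>0$ and that every vertex $U$ with $C(U)<n$ has a unique root. Since $C(V)>0$, the vertex $V$ is not itself a root, so any root $W$ of $V$ is reached from $V$ by a nonempty path; let $e_W$ be its first edge and $U_W$ the terminal vertex of $e_W$. Prepending $e_W$ to any path out of $U_W$ produces a path out of $V$, so $C(U_W)\le n-1<n$; by the inductive hypothesis $U_W$ has a unique root. The tail of the chosen path exhibits $W$ as a subordinate of $U_W$, and $W$ has no outgoing edges, so that unique root of $U_W$ is $W$. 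Thus each root $W$ of $V$ determines an outgoing edge $e_W$ of $V$ whose terminal vertex $U_W$ has $W$ as its unique root.

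Now take two roots $W_1,W_2$ of $V$, with associated edges $e_{W_1},e_{W_2}$, both issuing from $V$. Property (EE) gives a chain $e_{W_1}=d_1,d_2,\dots,d_m=e_{W_2}$ of edges out of $V$ such that the terminal vertices $T_j$ of $d_j$ and $T_{j+1}$ of $d_{j+1}$ have a common root for every $j$. As above each $T_j$ satisfies $C(T_j)\le n-1$, so by induction it has a unique root $r_j$; the common-root condition forces $r_j=r_{j+1}$ for all $j$, hence $r_1=r_m$. But $r_1$ is the unique root of $T_1=U_{W_1}$, which we identified as $W_1$, and likewise $r_m=W_2$. Therefore $W_1=W_2$, completing the induction.

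The one point I would treat with care — and the only place where anything could go wrong — is the bookkeeping in the inductive step that converts "$W$ is a root of $V$" into "$W$ is the \emph{unique} root of the vertex one step out of $V$ along the chosen path," since this identification is precisely what lets the inductive hypothesis engage with property (EE). The remaining ingredients are routine: the strict decrease $C(T_j)<C(V)$ along any outgoing edge, and the transitivity of "having the same unique root" along an (EE)-chain. The degenerate cases ($U_W=W$, or a length-one (EE)-chain) are covered automatically and present no difficulty.
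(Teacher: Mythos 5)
Your proof is correct, and it is essentially the argument behind the cited result: the paper itself only refers to \cite{Mat-HA} and even describes the relevant complexity function (your $C(V)$ is exactly the map $c$ mentioned after Theorem~\ref{abstract}), on which the standard Newman/diamond-lemma induction is run just as you do. The one delicate point you flag --- that a root of $V$ is the \emph{unique} root of the vertex one edge along, so that (EE) can be chained through the inductive hypothesis --- is handled correctly.
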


 Note that in \rm \cite{Mat-HA} the role of the property   {\rm (F)}  is played by a property {\rm (CF)} which says that
 there is a map $c\colon
 \mathbb{V}
(\Gamma)\to  \{0,1,2,\dots\}$  such that $c(V)>c(W)$ for every edge
$\overrightarrow{VW}$ of $\Gamma$. The property {\rm (F)} implies
{\rm (CF)}; an appropriate   map $c $ is defined as follows: for any
vertex  $V$ of $\Gamma$,  $c(V)$ is the maximal number of edges in a
path in $\Gamma$ starting at $V$.

Recall from Section~\ref{spr} the set   ${\cal U} $ whose elements
are (homeomorphism classes of) theta-curves, labeled knots, and
3-manifolds. We construct an oriented
 graph $\Gamma$ as follows. A vertex of
 $\Gamma $ is a finite sequence of elements of ${\cal U}$
(possibly with repetitions)  considered   up to the following
transformations:  (i)   permutations that change the position of
labeled knots and 3-manifolds in the sequence but keep the order of
theta-curves; (ii)  permutations of two consecutive terms of a
sequence $\theta', \theta'' $ allowed when both terms $\theta',
\theta'' $ are theta-curves and at least one of them is knot-like;
(iii) insertion or deletion trivial theta-curves, trivial labeled
knots, and copies of $S^3$.

 We now define the
 edges of $\Gamma $.
 Let  a vertex $V$ of $\Gamma$ be represented by a sequence $u_1,..., u_n \in \cal U$ and let  $i\in \{1, \ldots, n\}$. Suppose
 that  $(M_1,G_1), (M_2,G_2) $ are obtained from $u_i=(M,G)$ by
an   essential   spherical reduction along a sphere $S\subset M$. If
$G$ is a theta-curve, we choose the numeration so that $(M_1,G_1)$
contains the leg of $ G $ and $(M_2,G_2)$
    contains the head of $G$. If $G$ is a knot or an empty set, then
    the numeration is arbitrary.
 Let $W$ be the vertex of $\Gamma$ represented by the sequence $u_1, \ldots, u_{i-1}, (M_1,G_1), (M_2,G_2), u_{i+1}, \ldots, u_n $.
 We   say that  $W$   is obtained from $V$
by essential spherical reduction along $S$. Two vertices $V,W$ of $
 \Gamma $ are joined by an edge $\overrightarrow{VW}$ if
$W$ can be obtained from $V$ in this way.

\begin{lemma} \label{bound} $\Gamma$
has property (F).
\end{lemma}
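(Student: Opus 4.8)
We need to show that $\Gamma$ has property (F): for every vertex $V$ of $\Gamma$ there is a bound $C \geq 0$ on the number of edges in any path starting at $V$. Since a single edge performs an essential spherical reduction on one term $u_i = (M,G)$ of the sequence representing $V$, and produces the sequence with $u_i$ replaced by two terms $(M_1, G_1), (M_2, G_2)$, the plan is to assign to each element $u = (M,G) \in \mathcal{U}$ a nonnegative integer "complexity" $c(u)$ which strictly drops under essential reduction in the sense that $c(M_1,G_1) + c(M_2,G_2) < c(M,G)$, and is additive-or-better when we sum over the terms of a sequence. Then $c(V) := \sum_i c(u_i)$ is a strictly decreasing function along edges of $\Gamma$, which forces path lengths to be bounded by $c(V)$.

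The first step is to define the complexity so that (a) it is well-defined on homeomorphism classes and invariant under the transformations (i)--(iii) defining vertices of $\Gamma$ — in particular it must vanish on trivial theta-curves, trivial labeled knots, and copies of $S^3$; and (b) it is finite for every $u \in \mathcal{U}$. A natural candidate builds on the standard complexity used in prime decomposition arguments for $3$-manifolds (as in \cite{Milnor}, \cite{Mat-HA}): for a $3$-manifold $M$ one takes the number of prime summands in its connected-sum decomposition, say $p(M)$, which is finite by Kneser's theorem under the hypothesis (here applied to each $M$ occurring, though the global hypothesis that all $2$-spheres are separating enters in the main theorem rather than here — I would check whether Lemma \ref{bound} needs it or whether Kneser-finiteness alone suffices for each fixed $M$). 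For a labeled knot $(M,K)$ one adds the number of prime knot factors of $K$; for a theta-curve $(M,\Theta)$ one adds an analogous count. The key geometric input is that an essential spherical reduction along $S$ either splits off a nontrivial connected summand of the ambient manifold, or a nontrivial knot/theta factor, and in each case the two resulting pieces have complexities summing to strictly less than the original — because essentiality of $S$ exactly rules out the inessential reductions that merely reproduce $(M,G)$ together with a trivial pair.

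The second step is the bookkeeping: verify that for each of the finitely many types of admissible sphere $S$ (meeting $G$ in $0$, $2$, or $3$ points, with $G$ empty, a knot, or a theta-curve) and each essential reduction, the inequality $c(M_1,G_1) + c(M_2,G_2) < c(M,G)$ holds. For $G = \emptyset$ this is the classical additivity of the number of prime summands under connected sum together with the fact that an essential sphere realizes a nontrivial sum. For $G$ a knot, one combines this with Fox's argument (recalled in Section \ref{knots}) that a connected sum of knots is trivial iff both factors are. For $G$ a theta-curve, one uses the analogous statement recalled in the introduction that $\theta_1 \circ \theta_2$ is trivial iff both $\theta_i$ are, plus Lemma \ref{le2} relating primeness of knots and theta-curves, to handle reductions along spheres meeting $\Theta$ in $2$ points (which produce knots) versus $3$ points (which produce theta-curves). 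Finally, since $c(V) = \sum_i c(u_i)$ is a well-defined nonnegative integer that strictly decreases along every edge, any path from $V$ has length at most $c(V)$, establishing (F).

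The main obstacle I expect is setting up the complexity for theta-curves so that it is simultaneously (i) finite for every theta-curve, (ii) additive under $\circ$, and (iii) strictly decreasing under \emph{every} essential reduction, including the subtle case of a reduction along a sphere meeting only one edge (the "manifold insertion" direction) versus one meeting all three edges; these interact differently with the ambient-manifold part and the genuinely graph-theoretic part of the complexity. Getting the definition to absorb the transformation (ii) (commuting knot-like factors) and transformation (i) (moving manifold/knot summands past theta-curves) without ambiguity — i.e.\ checking well-definedness on $\mathbb{V}(\Gamma)$ — is where the care lies; the rest is a routine case analysis modeled on \cite{Mat-HA}.
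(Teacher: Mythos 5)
The paper offers no argument of its own here: Lemma~\ref{bound} is proved by quoting Lemma~6 of \cite{Mat-HA}, which is a Kneser--Haken type finiteness statement giving, for a compact $3$-manifold with an embedded graph, an \emph{a priori} bound (extracted from a triangulation via normal surface theory and homological counting, in the spirit of \cite{Kneser}) on the length of any chain of successive essential spherical reductions. No prime decompositions enter that argument.

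Your plan, by contrast, contains a genuine circularity. You propose to take as complexity the number of prime summands of the ambient manifold plus the number of prime factors of the knot or theta-curve, and you invoke Kneser's theorem for finiteness of the first. But the finiteness of the number of prime factors of a theta-curve (and of a knot in a general $3$-manifold) is precisely claim~1 of Theorem~\ref{main}, and in Section~8 that claim is \emph{deduced from} property~(F). So this complexity is not available when proving Lemma~\ref{bound}; you would be assuming the existence of finite prime decompositions in order to prove the finiteness statement that underlies their existence. (A further, smaller circularity: to know that your count is additive and strictly decreasing under essential reduction you would also need uniqueness of the decomposition, i.e.\ claim~2.) You correctly identify ``making the complexity finite for every theta-curve'' as the main obstacle, but that obstacle is the entire content of the lemma, and your sketch does not overcome it. The fix is to replace the prime-factor count by a quantity that is finite for elementary reasons --- for instance the maximal number of disjoint, pairwise non-parallel essential spheres in general position with respect to $G$, bounded in terms of the number of tetrahedra of a triangulation of $M$ as in Lemma~6 of \cite{Mat-HA} --- which is exactly what the cited lemma does.
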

\begin{proof} This is a special case of Lemma 6 of~\cite{Mat-HA}.
\end{proof}

\begin{definition}\label{branch} For each   $u \in \cal U$, we define a
  subgraph $\Gamma_u$ of $\Gamma$ as
follows.    The   vertices of $ \Gamma_u $ are all vertices of
$\Gamma$ subordinate to the  vertex of $\Gamma$ represented by the
1-term sequence   $u$. The edges of $\Gamma_u$ are all the edges of
$\Gamma$ with both endpoints in $ \Gamma_u$.
\end{definition}

\begin{lemma}\label{ee} Let $u=(M ,G )$ be a theta-curve or a labeled knot such that
 all $2$-spheres in $M$ are separating.  Then $\Gamma_u$ has property  (EE).
 \end{lemma}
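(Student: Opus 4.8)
The plan is to verify property (EE) for $\Gamma_u$ by a direct geometric analysis of two essential spherical reductions of a single element $u=(M,G)$ along two admissible essential spheres $S_1,S_2$, and then bootstrapping this to arbitrary pairs of edges with common initial vertex using the structure of $\Gamma$. First I would reduce to the case where the two edges issuing from the vertex $[u]$ correspond to reductions of the same term (here there is only one term $u$, so this is automatic at the start, but the inductive step will involve multi-term sequences; for those, two reductions along spheres sitting in different terms obviously commute, and composing in either order reaches the same vertex of $\Gamma$, hence those two edges are trivially equivalent). So the crux is: given two admissible essential spheres $S_1,S_2\subset M$, show that the edges $\overrightarrow{[u]\,W_1}$ and $\overrightarrow{[u]\,W_2}$ they define are equivalent in the sense of Definition~\ref{ee}.

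The key step is an induction on $\#(S_1\cap S_2)$, the number of circles of intersection, put in general position. If $S_1\cap S_2=\emptyset$, the two reductions are disjoint and can be performed in either order; reducing along $S_1$ then along (the image of) $S_2$, and vice versa, leads to vertices with a common root (in fact the same vertex, after permutation moves), so $e_1$ and $e_2$ are equivalent. If $S_1\cap S_2\neq\emptyset$, I would invoke Lemma~\ref{clean}: since all $2$-spheres in $M$ are separating, there is an essential sphere-mediator $S_3$ for $S_1,S_2$, so $\#(S_3\cap S_1)<\#(S_1\cap S_2)$ and $\#(S_3\cap S_2)<\#(S_1\cap S_2)$. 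Let $e_3$ be the edge from $[u]$ given by reducing along $S_3$. By the inductive hypothesis applied to the pair $(S_1,S_3)$ (fewer intersection circles), $e_1$ and $e_3$ are equivalent, because the terminal vertices of $e_1$ and $e_3$ are themselves vertices of $\Gamma_u$ to which property~(F) applies, so they have roots, and one shows those roots can be taken common by reducing both $W_1$ and $W_3$ further along disjointified copies of the relevant spheres. Likewise $e_3$ and $e_2$ are equivalent. Transitivity of the equivalence relation on edges then gives $e_1\sim e_2$.

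The point requiring care — and the main obstacle — is that the equivalence of edges in Definition~\ref{ee} is stated in terms of a \emph{common root} of the terminal vertices, not merely a common further reduction; so I must check that whenever two essential reductions of $[u]$ have disjoint (or mediated) defining spheres, the two resulting vertices admit descending paths in $\Gamma_u$ to a single vertex, and then any root of that vertex serves as a common root. Here property~(F) (Lemma~\ref{bound}) is essential: it guarantees roots exist and that the descending process terminates. For the disjoint case one checks that cutting along $S_1$ and then $S_2$ yields, after the allowed permutation/trivial-insertion moves defining vertices of $\Gamma$, the same vertex as cutting along $S_2$ then $S_1$ — this uses that spherical reduction only depends on the isotopy class of the sphere in the complement, and that the bookkeeping of leg/head for theta-curves is respected (the leg goes to the first piece, the head to the last, consistently). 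One subtlety is when $S_1$ or $S_2$ becomes inessential after cutting along the other; then the corresponding reduction produces a trivial extra term, which is deleted by move~(iii), and one must confirm this does not obstruct reaching the common vertex. With this commutation-up-to-moves in hand plus the mediator induction, property~(EE) for $\Gamma_u$ follows.

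Finally I would remark that combining Lemma~\ref{ee} with Lemma~\ref{bound} and Theorem~\ref{abstract} yields that the vertex $[u]$ has a unique root in $\Gamma_u$, which is precisely the statement that prime decompositions of $u$ exist and are unique up to the permitted commutations — i.e.\ Theorem~\ref{main} for theta-curves, and its knot analogue via Lemma~\ref{le2}.
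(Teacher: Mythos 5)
Your proposal is correct and follows essentially the same route as the paper: induction on the number of circles of $S_1\cap S_2$, with the disjoint case handled by commuting the two reductions to a common vertex (taking care of spheres that become inessential via the trivial-term deletion move) and the inductive step handled by the essential sphere-mediator from Lemma~\ref{clean} plus transitivity of edge equivalence. The paper's proof also treats the case of spheres lying in different terms of a multi-term vertex exactly as you indicate.
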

 \begin{proof} Let $V=(u_1,\ldots, u_n)$ be a  vertex of $\Gamma_u$.
   Suppose that edges $\overrightarrow{VW_1},
 \overrightarrow{VW_2}$ of  $\Gamma_u $ correspond to reductions along
 essential spheres $S_1,S_2$. These spheres lie in the ambient
 3-manifolds of $u_p, u_q$ for some $p, q\in \{1,\ldots ,
 n\}$. If $p\neq q$, then
 $S_1,S_2$
   survive  the reduction along each other.  Thus we may consider $S_1$ as a sphere in (a term of) $W_2$ and
$S_2$ as a sphere in (a term of) $W_1$. Both  these spheres are
essential and
 the reductions of
 $W_2$ along $S_1$ and of $W_1$ along $S_2$ yield the same vertex, $W $, of
 $\Gamma_{u}$.
 Any root   of $W $ is a
 common root of  $ W_1 $  and $ W_2$, and therefore the edges $\overrightarrow{VW_1},
 \overrightarrow{VW_2}$   are equivalent.

 It remains to consider  the case where both spheres
 $S_1,S_2$ lie   in the ambient
 3-manifold $M_p$ of the same term $u_p=(M_p,G_p)$ of $V$. Note that $M_p$
 is a submanifold of $M$ and therefore all $2$-spheres in $M_p$ are
 separating.
We prove the equivalence of the  edges $\overrightarrow{VW_1},
 \overrightarrow{VW_2}$ by induction on the
 number $m$
 of circles in   $S_1\cap S_2$.

  {\em Base of induction}. Let $m=0$, i.e., $S_1,S_2$ are disjoint.  Then each of
  these spheres
   survives the reduction along the other.  Thus we may consider $S_1$ as a sphere in (a term of) $W_2$ and
$S_2$ as a sphere in (a term of) $W_1$. Consider   the  vertices
$W_3, W_3'$ of $\Gamma_{u}$
       obtained by reducing
 $W_2$ along $S_1$ and $W_1$ along $S_2$. Let us  prove  that $W_3=W'_3$,
 see the diagram on the left-hand side of Figure~\ref{vista}. Assume first that each sphere $S_1,S_2$ meets $G_p$ in three points.
Then $G_p$ is a
 theta-curve and the reductions of $(M_p,G_p)$ along $S_1,S_2$ give
 three nontrivial theta-curves $\theta_i=(Q_i,\Theta_i), 1\leq i\leq 3$,
 where $ \Theta_1 $ and $ \Theta_3 $  contain the leg and  the
 head of $G_p$, respectively.  It follows that
 both
 $W_3$ and $ W_3'$ are obtained from $V$ by replacing the term $(M_p, G_p)$ with 3
 terms  $\theta_1, \theta_2,\theta_3$, see Figure~\ref{vista}.
 The other cases where at least one of the spheres $S_1,S_2$ meets $G_p$ in 2 or 0
 points
  are treated
 similarly.

\begin{figure}
\centerline{\psfig{figure=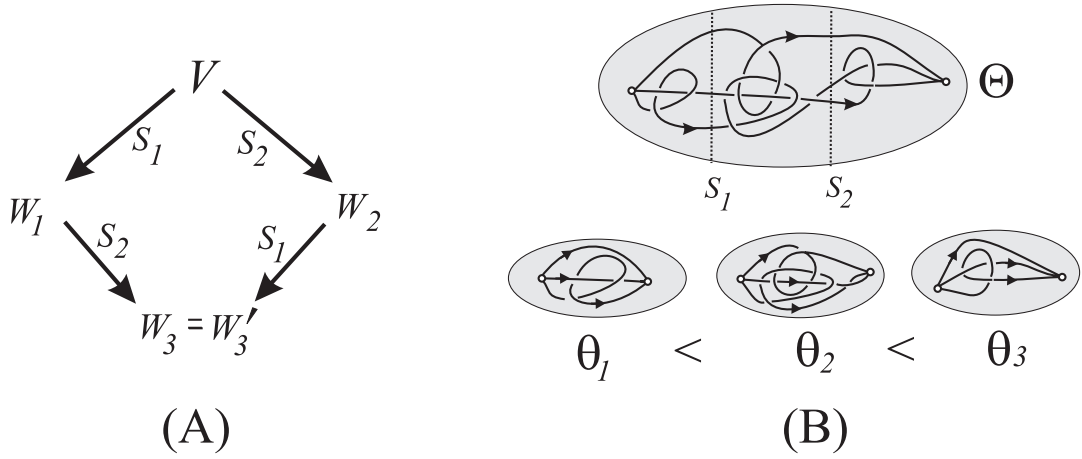,height=6cm}}
  \caption{(A) Reductions along disjoint spheres. (B) The ordering of $\theta_i$ is natural }
  \label{vista}
\end{figure}

    We claim that any root $R$ of $W_3$ is a
 common root of  $ W_1 $  and $ W_2$.
  Indeed,   if   $S_1$ is  essential in $W_2$, then $R$ is a root
  of $W_2$ by the definition of a root. If $S_1$ is inessential in
$W_2$, the  reduction  along it results in adding  to $W_2$ either
$S^3$, or a trivial knot, or a trivial theta-curve. Then $W_2=W_3$
by the definition of a vertex of $\Gamma$. Therefore $R$ is a root
of $W_2$. Similarly, $R$ is a root of $W_1$. Therefore,
 the edges
 $\overrightarrow{VW_1},
 \overrightarrow{VW_2}$  are
  equivalent.

{\em Inductive step.} Let $\# (S_1\cap
 S_2)=m+1$. It follows from Lemma~\ref{clean} that   there is an essential
 sphere-mediator $S_3$ such that it intersects $S_1$ and $S_2$ in
 a smaller number of circles. By the inductive assumption we know
 that the corresponding edge $\overrightarrow{VW_3}$ is equivalent to $\overrightarrow{VW_1}$ and
 $\overrightarrow{VW_2}$. It follows that $\overrightarrow{VW_1}$
 and $\overrightarrow{VW_2}$ are also equivalent.
\end{proof}

\begin{corollary} \label{mainc} Any vertex of $\Gamma_{u}$
has a unique root.
\end{corollary}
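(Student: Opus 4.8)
The plan is to deduce this corollary directly from the abstract root-uniqueness theorem (Theorem~\ref{abstract}) by verifying that the subgraph $\Gamma_u$ satisfies both hypotheses (F) and (EE).

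First I would observe that property (F) for $\Gamma_u$ is inherited from $\Gamma$: by Lemma~\ref{bound} the full graph $\Gamma$ has property (F), meaning for every vertex $V$ there is a bound $C \geq 0$ on the length of every path starting at $V$. Since $\Gamma_u$ is a subgraph of $\Gamma$ (its edges being a subset of the edges of $\Gamma$), every path in $\Gamma_u$ is in particular a path in $\Gamma$, so the same bound $C$ works. Hence $\Gamma_u$ has property (F).

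Next I would invoke Lemma~\ref{ee}: since $u = (M,G)$ is a theta-curve or a labeled knot with all $2$-spheres in $M$ separating, $\Gamma_u$ has property (EE). At this point both hypotheses of Theorem~\ref{abstract} are met for the graph $\Gamma_u$, so every vertex of $\Gamma_u$ has a unique root. I should also note that $\Gamma_u$ is nonempty and that the vertex of $\Gamma$ represented by the one-term sequence $u$ lies in $\Gamma_u$ (it is subordinate to itself by definition), so the statement is not vacuous; every vertex of $\Gamma_u$ indeed has at least one root by property (F) and exactly one by (EE).

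I do not expect any real obstacle here: the corollary is a formal consequence of the two preceding lemmas and the abstract theorem, and the only mild point to be careful about is the remark that property (F) passes to subgraphs and that Lemma~\ref{ee} applies verbatim to $\Gamma_u$ for the stated class of $u$. The genuine work was done in Lemma~\ref{clean} (existence of essential sphere-mediators) and Lemma~\ref{ee} (the inductive equivalence argument); the corollary merely packages these.

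\begin{proof}
By Lemma~\ref{bound}, the graph $\Gamma$ has property (F), and since $\Gamma_u$ is a subgraph of $\Gamma$, every path in $\Gamma_u$ is a path in $\Gamma$; hence $\Gamma_u$ also has property (F). By Lemma~\ref{ee}, $\Gamma_u$ has property (EE). Applying Theorem~\ref{abstract} to $\Gamma_u$, we conclude that every vertex of $\Gamma_u$ has a unique root.
\end{proof}
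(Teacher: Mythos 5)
Your proof is correct and matches the paper's, which likewise derives the corollary immediately from Theorem~\ref{abstract} together with Lemmas~\ref{bound} and~\ref{ee}. Your extra remark that property (F) is inherited by the subgraph $\Gamma_u$ is a sensible (if minor) point that the paper leaves implicit.
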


This   follows from  Theorem~\ref{abstract}  and Lemmas~\ref{bound}
and ~\ref{ee}.

\section{Proof of  Theorem~\ref{main}}        We apply to
$\theta$ consecutive reductions along essential spheres   meeting
the corresponding theta-curves in three points. After $m\geq 1$
reductions we obtain  a sequence of $m+1$ theta-curves. Since
$\Gamma$ has property (F), for some $m$ there will be no
essential spheres meeting the corresponding theta-curves in three
points. This means that all the theta-curves obtained after $m $
reductions are prime.  We obtain thus a sequence of prime
theta-curves
 whose product is equal to $\theta$. This proves the first claim of the theorem.

We now prove the second claim. Consider  an expansion of $\theta$
as a product of $n$ prime theta-curves $ \theta_1, \ldots,
\theta_n $. Let $W$ be the sequence $ \theta_1, \ldots, \theta_n
$.  It may happen that the theta-curve $\theta_j =(Q_j ,\Theta_j
)$ admits an essential reduction along a sphere $S\subset Q_j$
meeting $\Theta_j$ in two points. These points have to lie on the
same edge $e$ of $\Theta_j$ because otherwise the sphere $S$ would
be non-separating. If $i\in \{-,0,+\}$ is the label of $e$, then
this spherical reduction produces  a theta-curve $\theta'_j$ and a
knot $k_j\in {\cal K}_i$ such that $\theta_j=\theta'_j \circ
\tau_i(k_j)$. Since $\theta_j$ is prime, $\theta'_j$ is trivial.
We may conclude that $\theta_j=\tau_i(k_j)$ is knot-like, where
$k$ is a prime knot by Lemma~\ref{le2}. Similarly, if $\theta_j
=(Q_j ,\Theta_j )$ admits an essential reduction along a sphere
disjoint from $\Theta_j$, then  $\theta_j=\tau (Q)$ is also
knot-like, where $Q$ is a prime manifold.

 Replacing in the sequence $W$ all knot-like $\theta_j$ by the corresponding
knots $k_j$, we obtain a root of the vertex $\theta$ of $\Gamma$. By
the uniqueness of the root (Corollary~\ref{mainc}),  the expansion
$\theta=\prod_{j=1}^n\theta_j$ is unique up to the commutation
relations of knot-like theta-curves with all the others.


 \section{Corollaries}

\begin{theorem} \label{mainforknots} Let $k=(Q,K)$ be a non-trivial knot
such that all $2$-spheres in $Q$ are separating. Then  $k$ expands
as a connected sum $k= k_1\# k_2\# \dots \# k_n$ of $ n\geq 1$ prime
knots. This expansion is unique up to permutations of $k_1,  ...,
k_n$.
\end{theorem}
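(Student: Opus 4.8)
The plan is to deduce Theorem~\ref{mainforknots} from Theorem~\ref{main} via the injective homomorphism $\tau_i\colon{\cal K}\to{\cal T}$, exploiting the compatibility between connected sum of knots and vertex multiplication of theta-curves established in Section~\ref{knots+} and the characterization of prime knots in Lemma~\ref{le2}. First I would fix a label, say $i=0$, and consider the theta-curve $\theta=\tau_0(k)=(Q,\Theta_K)$. Since $k$ is non-trivial and $\tau_0$ is injective with $\tau_0$ of the trivial knot being the trivial theta-curve, $\theta$ is non-trivial; and since all $2$-spheres in $Q$ are separating, Theorem~\ref{main} applies. Hence $\theta=\theta_1\circ\cdots\circ\theta_n$ for prime theta-curves $\theta_j=(Q_j,\Theta^{(j)})$, unique up to commuting knot-like factors.

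The next step is to argue that every factor $\theta_j$ is in fact knot-like, i.e.\ lies in the image of some $\tau_{i_j}$. The decomposition of $\theta$ corresponds geometrically to a collection of disjoint essential spheres in $Q$ each meeting $\Theta_K$ in three points; by Lemma~\ref{le1} (applied to the spanning disc $D$ of the two edges of $\Theta_K$ labeled $-$ and $+$) we may isotope this system of spheres so that each meets $D$ in a single arc. Cutting $D$ along these arcs shows that each piece $(Q_j,Q_j\cap\Theta_K)$ still contains a spanning disc for two of its edges; equivalently, after deleting the edge of $\Theta^{(j)}$ not spanned by that disc, the theta-curve $\theta_j$ is recovered as $\tau_{i_j}(k_j)$ for a knot $k_j=(Q_j,K_j)$. (Concretely: the original knot $K$ is obtained from $\Theta_K$ by deleting one edge, and cutting along the spheres turns $K$ into a connected-sum decomposition $k=k_1\#\cdots\# k_n$.) By Lemma~\ref{le2}, since each $\theta_j=\tau_{i_j}(k_j)$ is prime, each knot $k_j$ is prime. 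Thus $k=k_1\#\cdots\# k_n$ with all $k_j$ prime, giving existence.

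For uniqueness, suppose $k=k_1\#\cdots\# k_n = k'_1\#\cdots\# k'_m$ are two decompositions into prime knots. Applying $\tau_0$ and using that it is a homomorphism, $\theta=\tau_0(k)=\tau_0(k_1)\circ\cdots\circ\tau_0(k_n)=\tau_0(k'_1)\circ\cdots\circ\tau_0(k'_m)$ are two expansions of $\theta$ into prime theta-curves (prime by Lemma~\ref{le2}). By part~(2) of Theorem~\ref{main}, these two expansions differ by a sequence of transpositions of consecutive factors in which at least one of the two swapped factors is knot-like. But every factor $\tau_0(k_j)$ is knot-like, so in our situation each such transposition is simply a transposition of adjacent terms; composing them realizes an arbitrary permutation. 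Hence $m=n$ and $(k'_1,\ldots,k'_n)$ is a permutation of $(k_1,\ldots,k_n)$, using injectivity of $\tau_0$ to pass back from $\tau_0(k_j)$ to $k_j$.

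I expect the main obstacle to be the bookkeeping in the existence half: verifying carefully that a system of disjoint essential spheres realizing the theta-curve decomposition can be simultaneously normalized with respect to the spanning disc $D$ (an iterated application of the innermost-circle argument of Lemma~\ref{le1}), and that after this normalization the induced decomposition of the knot $K$ is by essential spheres with the $B\cap K$ pieces being unknotted arcs — so that it genuinely is a connected-sum decomposition in the sense of Section~\ref{knots}, with non-trivial summands corresponding to the non-triviality of the $\theta_j$. The uniqueness half is comparatively formal once one observes that all factors are knot-like, so the exceptional commutation relations in Theorem~\ref{main}(2) degenerate into the full symmetric group action. One should also note at the outset that $Q$ itself need not be $S^3$; the hypothesis that all $2$-spheres in $Q$ separate is exactly what is needed to invoke Theorem~\ref{main}, and prime summands $k_j$ with $Q_j\neq S^3$ are allowed, consistent with the conventions of Definition~\ref{df:knot}.
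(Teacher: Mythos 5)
Your proposal is correct and follows essentially the same route as the paper, whose own proof is a two-line reduction to Theorem~\ref{main} via the injectivity of the semigroup homomorphism $\tau_i\colon {\cal K}\to{\cal T}$ together with Lemma~\ref{le2}. The extra work you do --- normalizing the decomposing spheres against the spanning disc $D$ via Lemma~\ref{le1} to see that every prime factor of $\tau_0(k)$ actually lies in the image of $\tau_0$ --- is exactly the detail the paper leaves implicit (it is the content of the proof of Lemma~\ref{le2}), so you have simply made the cited argument explicit.
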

\begin{proof}   Pick any $i\in \{-, 0, +\}$. The claim follows from Theorem~\ref{main}, the injectivity of the semigroup homomorphism
  $  \tau_i\colon {\cal K} \to
{\cal T} $, and the fact that $k\in {\cal K}$ is prime if and only
if $\tau_i(k)$ is prime
 (Lemma~\ref{le2}). \end{proof}

 A more general version of this theorem was proved
by Miyazaki~\cite{miya}.

Let ${\cal U}^{\circ}={\cal T}^{\circ}  \bigsqcup   {\cal
K}^{\circ}_- \bigsqcup {\cal K}^{\circ}_0 \bigsqcup   {\cal
K}^{\circ}_+ \bigsqcup {\cal
  M}^{\circ}$
be a subset of ${\cal U}$ consisting of the pairs $(M,G)$ such
  that all spheres in  $M$ are separating. Any element of ${\cal U}^{\circ}$
  expands as a product (or connected sum) of prime elements of  ${\cal U}^{\circ}$. Let
${\widehat {\cal T}}^{\circ}$ be the subsemigroup of ${\cal
T}^{\circ}$ consisting of theta-curves having no knot-like
factors. Similarly, denote by ${\widehat {\cal K}}^{\circ}$ the
subsemigroup of ${\cal K}^{\circ}$ consisting of knots having no
3-manifold summands. A knot $(Q,K)\in {\cal K}^{\circ}$ lies in
${\widehat {\cal K}}^{\circ}$  if and only if $Q\setminus K$ is an
irreducible $3$-manifold. For $i\in \{-,0,+ \}$ we denote by
${\widehat {\cal K}}^{\circ}_i$ a copy of ${\widehat {\cal
K}}^{\circ}$ formed by $i$-labeled knots.

\begin{theorem} \label{cor} The following holds:

\begin{enumerate}

\item $ {\cal M}^{\circ}$ and $ \widehat {\cal K}^0$ are free
abelian semigroups freely generated by their prime elements.

\item  ${\widehat {\cal T}}^{\circ}$ is a free   semigroup  freely generated by its
prime elements.

\item $ {\cal K}^{\circ}={\widehat {\cal K}}^{\circ} \times {\cal M}^{\circ}$.

\item ${\cal T}^{\circ}= {\widehat {\cal T}}^{\circ} \times {\cal C}$,   where  ${\cal
C}= {\widehat {\cal K}}^{\circ}_- \times {\widehat {\cal
K}}^{\circ}_0 \times {\widehat {\cal K}}^{\circ}_+ \times  {\cal
M}^{\circ}$  is the center of ${\cal T}^{\circ}$.
\end{enumerate}
\end{theorem}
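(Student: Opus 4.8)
The strategy is to deduce everything from the existence and uniqueness of prime decompositions established in Theorem~\ref{main} (for theta-curves), Theorem~\ref{mainforknots} (for knots), and the classical Milnor theorem (for $3$-manifolds), together with Lemma~\ref{le2} which identifies prime knot-like theta-curves with prime knots and prime flat theta-curves with prime manifolds. The key structural observation is that the center of ${\cal T}$ consists exactly of the knot-like theta-curves (stated in Section~\ref{knots+}), and that a knot-like theta-curve of the form $\tau_i(k)$ with $k=(Q,K)$ further splits as $\tau_i(\widehat k)\circ \tau(Q')$ where $\widehat k$ is the ``knot part'' (with $Q\setminus K$ irreducible) and $Q'$ is the ``manifold part'' — this is the knot-analogue of the theta-curve splitting and follows from Theorem~\ref{mainforknots} applied to $k$ and the fact that the prime summands of $k$ are either prime knots in $S^3$-type pieces or prime closed manifolds (flat knots). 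So I first record, as a preliminary step, the canonical splitting ${\cal K}^{\circ}\cong \widehat{\cal K}^{\circ}\times {\cal M}^{\circ}$ of item~(3): by Theorem~\ref{mainforknots} every $k\in{\cal K}^{\circ}$ has a unique prime decomposition, the prime summands partition into those that are flat knots in prime manifolds (contributing a factor in ${\cal M}^{\circ}$) and those with irreducible complement (contributing a factor in $\widehat{\cal K}^{\circ}$), and since $\#$ is commutative and associative the resulting map is a semigroup isomorphism; the characterization of $\widehat{\cal K}^{\circ}$ by irreducibility of $Q\setminus K$ is immediate since a reducing sphere in $Q\setminus K$ either bounds a ball meeting $K$ in an unknotted arc (inessential) or yields a manifold summand. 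Item~(1) is then the special case: ${\cal M}^{\circ}$ is free abelian on its primes by Milnor, and $\widehat{\cal K}^0$ is free abelian on its primes because $\tau_0$ embeds ${\cal K}$ into the center of ${\cal T}$, connected sum of knots corresponds to the (commutative, within the center) vertex product, and uniqueness of the decomposition into primes with irreducible complement follows from Theorem~\ref{mainforknots} after discarding the manifold summands.

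Next I turn to item~(2). By Lemma~\ref{le2}, $\widehat{\cal T}^{\circ}$ is generated by the prime theta-curves that are \emph{not} knot-like. By Theorem~\ref{main}(1) every element of $\widehat{\cal T}^{\circ}$ is a product of prime theta-curves; I must argue that all these prime factors are automatically non-knot-like — this holds because in the expansion $\theta=\tau(M_{\rm part})\circ\prod_i\tau_{j_i}(k_i)\circ\theta'$ obtained by collecting the knot-like and manifold factors, the hypothesis that $\theta$ has no knot-like factor forces the knot-like and manifold parts to be trivial, so the decomposition consists purely of non-knot-like primes. Then Theorem~\ref{main}(2) says the decomposition is unique up to commuting knot-like factors past the others; but here there are no knot-like factors, so the decomposition is unique with no relations at all, i.e. the $\theta_j$ form a word that is determined up to nothing — which is precisely the statement that $\widehat{\cal T}^{\circ}$ is free (non-abelian) on its prime elements. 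I should be careful to note that the vertex ordering of the prime factors is genuinely recorded (the leg/head distinction in the definition of the edges of $\Gamma$ ensures the non-knot-like primes appear in a well-defined linear order), so ``free semigroup'' — not ``free monoid with relations'' — is the correct conclusion.

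Finally, item~(4). I claim the map ${\cal T}^{\circ}\to \widehat{\cal T}^{\circ}\times {\cal C}$ sending $\theta$ to (its non-knot-like part, its knot-like part) is an isomorphism, where ${\cal C}=\widehat{\cal K}^{\circ}_-\times\widehat{\cal K}^{\circ}_0\times\widehat{\cal K}^{\circ}_+\times{\cal M}^{\circ}$. Given $\theta\in{\cal T}^{\circ}$, apply Theorem~\ref{main}(1) to get a prime decomposition; sort the prime factors into the non-knot-like ones (whose ordered product lands in $\widehat{\cal T}^{\circ}$), the knot-like ones of type $\tau_i$ for each fixed $i$ (whose product, after stripping manifold summands via item~(3), lands in $\widehat{\cal K}^{\circ}_i$), and the flat ones (landing in ${\cal M}^{\circ}$); well-definedness and injectivity are exactly the uniqueness clause Theorem~\ref{main}(2), since the only permitted relations move knot-like factors around, which does not affect the sorted data, and surjectivity is clear by taking vertex products. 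That ${\cal C}$ is the \emph{full} center of ${\cal T}^{\circ}$ requires the converse inclusion: if $\theta$ is central, then in its prime decomposition every non-knot-like prime factor $\theta_j$ would have to commute with all theta-curves; but a non-knot-like prime cannot be central because one can exhibit a theta-curve it fails to commute with (e.g., any non-knot-like prime $\eta$ with $\theta_j\circ\eta\neq\eta\circ\theta_j$, distinguished by the uniqueness of prime decomposition and the recorded vertex order). Hence a central element has no non-knot-like factors, i.e.\ lies in ${\cal C}$.

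The main obstacle I anticipate is the last point — proving that the center of ${\cal T}^{\circ}$ is \emph{exactly} ${\cal C}$ and not larger. Containment ${\cal C}\subseteq Z({\cal T}^{\circ})$ is easy (knot-like and manifold factors were observed to be central already in Section~\ref{knots+}). The reverse requires producing, for any non-knot-like prime $\theta_j$, an explicit witness theta-curve against which it does not commute; the cleanest route is: the two vertex orders $\theta_j\circ\eta$ and $\eta\circ\theta_j$ of two distinct non-knot-like primes have, by Theorem~\ref{main}(2), the \emph{same} multiset of prime factors but in opposite vertex order, and since neither factor is knot-like the only allowed commutation relation is inapplicable, so the two products are genuinely distinct elements of ${\cal T}^{\circ}$ — giving non-commutativity and hence non-centrality of $\theta_j$. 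This reduces everything to the non-triviality of the leg/head ordering in Theorem~\ref{main}, which is already built into the construction of $\Gamma$.
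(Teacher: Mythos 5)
Your proposal is correct and takes the same route as the paper, whose proof of this theorem consists of the single sentence that it follows from Theorems~\ref{main} and~\ref{mainforknots}; you have simply carried out that deduction in detail (sorting prime factors by type, using the uniqueness clause to get freeness and the direct-product splittings, and using the strict ordering of non-knot-like primes to pin down the center). The elaboration is sound and fills in exactly the steps the authors left implicit.
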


\begin{proof} This follows from  Theorems~\ref{main} and~\ref{mainforknots}.
\end{proof}

Sergei Matveev

Chelyabinsk State University

Kashirin Brothers Str. 129

Chelyabinsk  454001

Russia

e-mail: matveev@csu.ru

\vskip1truecm

Vladimir Turaev

Department of Mathematics

 \indent Indiana University

                     \indent Bloomington IN47405

                     \indent USA

\indent e-mail: vtouraev@indiana.edu

\end{document}